\documentclass{amsart}

\usepackage{amsmath}

\theoremstyle{plain}

\newtheorem{proposition}{Proposition}[section]

\numberwithin{equation}{section}

\theoremstyle{definition}

\title{A Geometric Representation}
\author{\emph{Nicholas Phat Nguyen} (nicholas.pn@gmail.com)}

\begin{document}

\maketitle

\begin{abstract}
This article provides a geometric representation for the well-known
isomorphism between the special orthogonal group of an isotropic quadratic space
of dimension 3 and the group of projective transformations of a projective line.
This geometric representation depends on the theory of inversive transformations
in dimension 1 as outlined in the 2021 article \emph{Projective Line Revisited} by the same author.  This geometric representation also provides a new perspective on some classical properties of the projective line, such as the classical cross ratio.

\emph{Keywords}: special orthogonal group, isotropic quadratic space of dimension 3,
projective line, projective transformations, involutions, inversive transformations, cross ratio.

\emph{AMS2020 Subject Classification}:  51N30, 51N15, 51F25

\end{abstract}

\section{Introduction}
\label{sec:introduction}

It has been known since the 1950s that the special orthogonal group of an isotropic quadratic space of dimension 3 is isomorphic to the group of projective transformations of a projective line. That is discussed, for example, near the end of the classic 1957 book \emph{Geometric Algebra} by Emil Artin. See \cite{artin} at Theorem 5.20. (The Artin book has been republished in a 2016 edition by Dover.)  See also \cite{shimura} at section 25. Specifically, let $V$ be a vector space of dimension 3 over a commutative coordinate field $K$ of characteristic $\neq 2$, and suppose that $V$ is endowed with a regular quadratic form. If the quadratic space
$V$ is isotropic, i.e., if $V$ has an isotropic vector, then the special orthogonal group $SO(V)$ is isomorphic to the group $PGL(1, K)$ of projective transformations of the projective line $K\cup \{\infty\}$.

However, that isomorphism is based on general algebraic considerations of the associated Clifford algebra, and by itself has no geometric representation. Specifically, the theory of Clifford algebra in dimension 3 tells us that the special orthogonal group of $V$ is isomorphic to the group of invertible elements of the even Clifford algebra modulo scalar factors. The even Clifford algebra in this case is a quaternion algebra (central simple of dimension 4), and in particular must be isomorphic to a matrix algebra because the quadratic space is isotropic. So the special orthogonal group is isomorphic to the group of invertible 2-by-2 matrices modulo scalar factors, which is $PGL(1, K)$.

To my knowledge, there has been no published account of any geometric representation
for the above equivalence. By geometric representation, I mean a projective line constructed from the given quadratic space $V$ and a faithful action of the group $SO(V)$ on that line which is the same as the group of projective transformations. The goal of this article is to provide such a geometric representation, by the theory of inversive transformations in dimension 1 as outlined in my earlier article \emph{Projective Line Revisited}. See \cite{phatnguyen}.

For the convenience of the reader, we summarize here the basic ideas of our geometric representation.  Given an isotropic quadratic space $V$ of dimension 3, we consider the projective quadric defined by the quadratic form of $V$.  The group $SO(V)$ is naturally isomorphic to the projective orthogonal group of $V$, and its action on the quadric is naturally the same as the faithful action by the projective orthogonal group on the quadric, which is generated by reflections defined by nonisotropic vectors of $V$. 

There is a natural way for us to identify that quadric with the points of the projective line $K\cup \{\infty\}$ = $\mathbf{P}(K^2)$, and to identify nonisotropic vectors of $V$ with regular quadratic forms on $K^2$.  Under that identification, we will show that the action on the projective quadric defined by a nonisotropic vector of $V$ is the same as the involution or inversive transformation defined by the associated quadratic form on $\mathbf{P}(K^2)$.  The action of such inversive transformations has been defined and analyzed in \cite{phatnguyen}.

Accordingly, by using the theory of inversive transformations in dimension 1 as outlined in my earlier paper \emph{Projective Line Revisited}, we can identify the action of the special orthogonal group on the projective quadric of $V$ with the projective transformations of a projective line $K\cup \{\infty\}$.

With this geometric representation, we also gain a new perspective on some classical properties of the projective line, which we will discuss at the end of the article.

\section{Isotropic Quadratic Space of Dimension 3}
\label{sec:involutions}

Let $V$ be a $K$-vector space of dimension 3 over a commutative coordinate field $K$ of characteristic $\neq 2$, with a given regular quadratic form $\phi$. 

If $V$ is isotropic, then the quadratic space $V$ can be expressed as a direct sum of $<d>$ and an Artinian plane (also known as a hyperbolic plane), where $<d>$ denotes the quadratic form on $K$ given by $x \mapsto dx^2$, i.e., the associated bilinear pairing is $d$ times ordinary multiplication.  We can always choose a suitable orthogonal basis so that the quadratic form  $\phi$ on $V$ can be put in the diagonal form $<d, d, -d>$ = $d<1, 1, -1>$.  For any nonzero scalar $e$, the form $\phi$ and the form $e\phi$ have the same projective quadric, and the groups $SO(\phi)$ and $SO(e\phi)$ are naturally isomorphic, with the same action on the quadric.  See, e.g., \cite{shimura} at Lemma 24.9.  So for our purpose, we can assume that the quadratic form on $V$ is isomorphic to the form $<1, 1, -1>$, which is the sum of $K$ with ordinary multiplication and an Artinian plane.

(If interested, the reader can work directly with the form $<d, d, -d>$ by making necessary adjustments in relevant formulas.  However, in order to keep our references and formulas simple, we will work with the standard isotropic form $<1, 1, -1>$.)

The space $V$ with the standard isotropic form $<1, 1, -1>$ is isomorphic to the space of all symmetric bilinear forms on $K^2$, with a suitable pairing as described below.

To be specific, consider the set $E$ of all polynomials $p(X)$ of degree $\le 2$,
with coefficients in the field $K$. The set $E$ is naturally a $K$-vector space
of dimension~3. We will refer to a nonzero polynomial $p$ as a $2$-cycle,
$1$-cycle, or $0$-cycle depending on whether the degree of $p$ is $2$, $1$, or
$0$. For convenience, we will write each element $p$ of $E$ in the same form
$p(X) = aX^2 + bX + c$, with the understanding that each coefficient $a$, $b$,
and $c$ could be zero.

We can endow the vector space $E$ with a symmetric bilinear form as follows: 
if $p = aX^2+ bX + c$ and $p^\ast = a^\ast X^2+ b^\ast X + c^\ast$, then 
we define 
$$
\langle p, p^\ast\rangle =  bb^\ast - 2ac^\ast - 2a^\ast c.  
$$

We refer to this pairing as the cycle pairing or cycle product on $E$.  The cycle pairing is plainly isomorphic to the sum of $K$ (represented by the middle coefficient, with ordinary multiplication) and an Artinian plane. 

We can think of $E$ as the space of all symmetric bilinear forms on $K^2$. Specifically, a cycle $p$ of $E$ can be thought of as a function from $K$ to $K$ given by $p(x) = q((x, 1), (x, 1))$ where $q$ is a symmetric bilinear form on $K^2$.\footnote{ The elements $p$ of $E$ are defined as polynomials of degree~2 or less, but because the field $K$ has 3 or more elements, such a polynomial $p$ can be identified with a polynomial function from $K$ to $K$.} 

If $p = aX^2 + bX + c$, then the matrix for the corresponding symmetric bilinear form $q$ (relative to the standard basis of $K^2$) has entries $a$ and $c$ in the main diagonal, and entries $b/2$ in the cross diagonal. In other words, we can think of $p = aX^2 + bX + c$ as an abbreviated expression for the homogeneous polynomial $aX^2 + bXY + cY^2$.

The space $E$ with the cycle pairing is isometric to $V$, and so we can restrict our attention in the following to $E$.

\begin{proposition}
  \label{prop:1}
  The isotropic cycles of $E$ (with respect to the cycle pairing defined above) are the $0$-cycles and the $2$-cycles with zero discriminant $b^2 - 4ac$.  To each such isotropic cycle, we can associate a nonzero vector $(x, y)$ in $K^2$, unique up to scalar factors, that represents the projective zero point of that isotropic cycle (regarded as a quadratic form on $K^2$).
\end{proposition}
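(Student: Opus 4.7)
The plan is a direct computation followed by a short case analysis. From the defining formula, substituting $p^\ast = p$ gives $\langle p, p\rangle = b^2 - 2ac - 2ac = b^2 - 4ac$, so a cycle is isotropic precisely when its discriminant vanishes. I would then split on the leading coefficient $a$. If $a \neq 0$, then $p$ is a $2$-cycle and the condition $b^2 - 4ac = 0$ is the usual discriminant condition. If $a = 0$, the condition degenerates to $b^2 = 0$, hence $b = 0$, forcing $p$ to be a (nonzero) $0$-cycle $p = c$. In particular, no $1$-cycle can be isotropic since for it $a = 0$ and $b \neq 0$ yield $\langle p,p\rangle = b^2 \neq 0$. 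This establishes the first assertion.

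For the second assertion I would use the identification of $p = aX^2 + bX + c$ with the homogeneous binary quadratic form $Q(X, Y) = aX^2 + bXY + cY^2$ introduced just before the proposition. A projective zero of $p$ is by definition a class $[x : y] \in \mathbf{P}(K^2)$ with $Q(x, y) = 0$. I would exhibit such a point explicitly in each case. For a $0$-cycle $p = c$ with $c \neq 0$ the form is $Q = cY^2$, whose only projective zero is $[1 : 0]$ (the point at infinity). For a $2$-cycle with $b^2 - 4ac = 0$, the fact that $\operatorname{char} K \neq 2$ lets me complete the square and write
\[
Q(X, Y) \;=\; a\bigl(X + \tfrac{b}{2a}Y\bigr)^{2},
\]
so the only projective zero is $[-b : 2a]$.

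Finally I would address uniqueness. A nonzero homogeneous quadratic form in two variables has at most two projective zeros, with equality to one exactly when the form is, up to scalar, a perfect square, i.e.\ when its discriminant vanishes; this is precisely our isotropy condition. Hence in each case the projective zero exhibited above is the unique such point, and the representing vector $(x, y) \in K^2$ is therefore determined up to a nonzero scalar.

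I do not expect any real obstacle: the whole proposition reduces to the computation $\langle p,p\rangle = b^2 - 4ac$ together with standard facts about binary quadratic forms. The only point requiring care is the $0$-cycle case, where the ``zero'' of $p$ sits at infinity of the affine line and is visible only after passing to the homogenization $Q$.
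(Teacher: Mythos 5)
Your proposal is correct and follows essentially the same route as the paper: compute $\langle p,p\rangle = b^2-4ac$, rule out $1$-cycles, and identify the unique projective zero of each isotropic cycle (the paper writes the degenerate $2$-cycle as $a(X-u)^2$ where you complete the square to $a(X+\tfrac{b}{2a}Y)^2$, which is the same thing). Your explicit remark on uniqueness of the projective zero is slightly more careful than the paper's, but the substance is identical.
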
  

\begin{proof}
  The $0$-cycles $p = c$ are clearly isotropic.  The $1$-cycles $p = bX + c$ with $b \neq 0$ are never isotropic as the norm $\langle p, p \rangle$ of such a cycle is $b^2$. For a $2$-cycle $p = aX^2 + bX + c$ with $a \neq 0$, its norm $\langle p, p \rangle = b^2 - 4ac$, which is the well-known discriminant of the quadratic polynomial $p$.
  
  Each $0$-cycle $p = c$, regarded as the quadratic form $cY^2$ on $K^2$, has the same zero vectors $(x, 0)$, which represent the point at infinity in the projective line $\mathbf{P}(K^2)$. Each $2$-cycle with zero discriminant can be written as $a(X - u)(X - u) = a(X^2 - 2uX + u^2)$, and regarded as a quadratic form on $K^2$, that $2$-cycle has $(ux, x)$ as zero vectors.  Those zero vectors represent the finite point $(u, 1)$ of the projective line $\mathbf{P}(K^2)$.
\end{proof}

Based on the above description of isotropic cycles in $E$, the quadric of $E$ under the cycle pairing has a natural identification with the projective line $\mathbf{P}(K^2)$.

\section{Action of $SO(E)$ on the Quadric}
\label{sec:action}

Note that because $E$ has odd dimension, the map $v \mapsto -v$ has determinant $-1$.  Accordingly, the natural homomorphism of $SO(E)$ to the projective orthogonal group of $E$ is an isomorphism.  The natural action of $SO(E)$ on the projective quadric of $E$ is the same as the natural action of the projective orthogonal group, and such action is generated by reflections (or symmetries) defined by nonisotropic elements of $E$. 
As a threshold matter, we want to show that the action of the projective orthogonal group of $E$ on the quadric is faithful, meaning that the only projective orthogonal transformation of $E$ that leaves invariant all points of the quadric is the identify transformation.  Take the following cycles in $E$: 
\bigskip
\begin{itemize}
    \item $u = X.X$
    \item $v = (X - 1).(X - 1) = X.X - 2X + 1$
    \item $w = 1$
    \item $t = X.X - 4X + 4 = -u + 2v + 2w$
\end{itemize}
\bigskip

Note that $u$, $v$, and $w$ are a linear basis of the 3-dimensional vector space $E$, and $t$ can be expressed as a linear combination of $u$, $v$, and $w$ with no zero coefficient.  These 4 points therefore give us a projective frame of $E$.  Moreover, all these points are isotropic.  Therefore, if a projective transformation leaves all isotropic points in the projective space of $E$ invariant, it must be the identity transformation.

We now determine the action of a reflection, defined by a nonisotropic cycle in $E$, on the quadric.  

\begin{proposition}
  \label{prop:2}
  Given a nonisotropic cycle in $E$, the reflection defined by that cycle has the same action on the quadric of $E$ as the involution or inversive transformation defined by that cycle on the projective line $\mathbf{P}(K^2)$ = $K\cup \{\infty\}$, where the quadric of $E$ is identified with the projective line as discussed above.
\end{proposition}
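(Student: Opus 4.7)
The plan is to evaluate both transformations explicitly in coordinates and match them coefficient by coefficient. Using the identification from Proposition~2.1, to each nonzero vector $\alpha = (x,y) \in K^2$ I would assign the isotropic cycle
\[
p_\alpha(X) \;=\; (yX - x)^2 \;=\; y^2 X^2 - 2xy\,X + x^2,
\]
whose projective double root is $[\alpha]$. For a nonisotropic $r = aX^2 + bX + c$, viewed as the quadratic form $q_r$ on $K^2$ with associated symmetric bilinear form $B_r$, a short expansion of the cycle pairing gives the two key identities
\[
\langle r, r\rangle \;=\; b^2 - 4ac, \qquad \langle p_\alpha, r\rangle \;=\; -2\,q_r(\alpha).
\]

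Substituting these into the standard reflection formula yields
\[
\sigma_r(p_\alpha) \;=\; p_\alpha \;-\; \frac{2\,\langle p_\alpha, r\rangle}{\langle r, r\rangle}\, r \;=\; p_\alpha \;+\; \frac{4\,q_r(\alpha)}{b^2 - 4ac}\, r.
\]
Clearing the denominator and expanding, the $X^2$, $X$, and constant coefficients of $(b^2 - 4ac)\,\sigma_r(p_\alpha)$ collapse respectively to
\[
(2ax+by)^2, \qquad 2(bx+2cy)(2ax+by), \qquad (bx+2cy)^2.
\]
Setting $\beta := \bigl(-(bx + 2cy),\; 2ax + by\bigr)$, these are precisely the coefficients $y_\beta^2,\ -2 x_\beta y_\beta,\ x_\beta^2$ of $p_\beta$, so $(b^2 - 4ac)\,\sigma_r(p_\alpha) = p_\beta$. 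A one-line check gives $B_r(\alpha, \beta) = 0$, which means $[\beta]$ is the image of $[\alpha]$ under the inversive involution $\iota_r$ on $\mathbf{P}(K^2)$ attached to the quadratic form $q_r$, exactly as characterized in \cite{phatnguyen}. Therefore $\sigma_r$ agrees with $\iota_r$ on the quadric.

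The only real obstacle is bookkeeping: each of the three coefficient identities is a perfect-square or bilinear product identity, and each must be expanded carefully by hand. A more conceptual alternative would exploit that a projective involution of a line is determined by any two pairs of points and their images: one could verify the agreement of $\sigma_r$ and $\iota_r$ on two conveniently chosen isotropic cycles (for example, on the cycles representing the zeros of $r$ in the split case, where both transformations fix those points), and then invoke the faithfulness of the projective action established immediately before the proposition. I would nonetheless favor the direct computation, since it recovers the explicit coordinate formula $\iota_r(x,y) = \bigl(-(bx+2cy),\; 2ax+by\bigr)$ as a useful byproduct.
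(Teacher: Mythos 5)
Your computation is correct---the three coefficient identities and the vanishing of $B_r(\alpha,\beta)$ all check out---but your route is genuinely different from the paper's. The paper never computes the image of a general isotropic cycle. It first proves a small lemma (the zero points of a cycle $p$ in $\mathbf{P}(K^2)$ correspond exactly to the isotropic cycles orthogonal to $p$ under the cycle pairing), reads off the fixed points of the reflection from the condition $\langle x,p\rangle=0$, and then handles conjugate pairs structurally: the cycle $(X-u)(X-v)$ is, up to scalar, the unique cycle vanishing at a conjugate pair $\{u,v\}$, hence is an eigenvector of the reflection, hence lies in $p^{\perp}$ since it is not proportional to $p$; a separate hand computation treats the case where one point of the pair is $\infty$. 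Only then does it invoke Proposition 1 of \cite{phatnguyen}, which characterizes the inversive involution by precisely these data (fixed points are the zeros of $p$, conjugate pairs are the zeros of cycles orthogonal to $p$). Your homogeneous parametrization $p_\alpha=(yX-x)^2$ treats finite points and $\infty$ uniformly, so it eliminates the paper's case split and the eigenvector argument, and it produces the explicit coordinate formula $\iota_r[x:y]=[-(bx+2cy):2ax+by]$ as a usable byproduct; the price is the coefficient expansion, whereas the paper's argument buys a coordinate-free picture of how the reflection's eigenspace decomposition mirrors the fixed-point/conjugate-pair structure of the involution. Two small points to tie off in your write-up: (i) note that $\beta\neq 0$ for $\alpha\neq 0$, because the linear map $(x,y)\mapsto(-(bx+2cy),\,2ax+by)$ has determinant $b^2-4ac\neq 0$ (equivalently, $\sigma_r$ is invertible), so $[\beta]$ is a genuine point of $\mathbf{P}(K^2)$; and (ii) since the cited Proposition 1 is stated in terms of cycles orthogonal to $r$ rather than the conjugacy condition $B_r(\alpha,\beta)=0$, add the one-line bridge $\langle (yX-x)(y'X-x'),\,r\rangle=-2B_r(\alpha,\beta)$, which shows the two formulations are equivalent.
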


\begin{proof}
  
  Let us first make the following key observation about the identification of the projective quadric of $E$ with the projective line $\mathbf{P}(K^2)$.  If $p$ is a cycle in $E$, then the zero points of $p$ in $\mathbf{P}(K^2)$, if any, are identified with the isotropic cycles of $E$ that are orthogonal to $p$ under the cycle pairing.
  
  Indeed, consider a $0$-cycle $p = c$.  Regarded as the quadratic form $cY^2$ on $K^2$, that cycle has only one zero point in $\mathbf{P}(K^2)$, namely the point at infinity represented by the vectors $(u, 0)$.  That point is identified with any $0$-cycle $r = u$.  It is clear that $\langle p, r \rangle$ = 0.
  
  Now consider any $1$-cycle $p$ = $bX + c$, with $b$ $\neq 0$.  That cycle has two zero points in $\mathbf{P}(K^2)$, namely the point at infinity represented by the vectors $(u, 0)$ and also the finite point represented by $(c, -b)$, which corresponds, up to a scalar factor, to the cycle $q = X^2 + (2c/b)X + (-c/b)^2$ under our identification.  The cycle pairing of $p$ with the cycle $r = u$ is clearly zero.  The cycle pairing of $p$ with the cycle $q = X^2 + (2c/b)X + (-c/b)^2$ is equal to $2c - 2c = 0$.  So our observation also holds in this case.
  
  Now consider a $2$-cycle $aX^2 + bX + c$, with $a$ $\neq 0$.  The point at infinity is not a zero point of that cycle.  If the polynomial $aX^2 + bX + c$ is not irreducible over $K$, so that the expression has roots $u$ and $v$ in $K$ (which may be the same if the root is repeated), then the cycle has zero points represented by $(u, 1)$ and $(v, 1)$.  Take the zero point $(u, 1)$  as example, it is represented by the isotropic cycle $q = X^2 - 2uX + u^2$.  The cycle pairing $\langle p, q \rangle = -2ub - 2au^2 - 2c = -2p(u) = 0$.  So the observation holds in all cases.
  
  Let $p$ be a nonisotropic cycle in $E$.  The reflection defined by $p$ is the linear transformation of $E$ given by the following well-known formula:         \begin{equation*}
         x \mapsto x' = x - \frac{2\langle x, p \rangle}{\langle p, p \rangle}p
   \end{equation*}
  
  This is a transformation of order 2, so the action of this reflection on the quadric is determined by the fixed points (if any), and pairs of points conjugate under the reflection.
  
  The reflection equation tells us that a point on the quadric  represented by $x$ is invariant under the reflection action if and only if $\langle x, p \rangle = 0$.  By our observation above, the isotropic point represented by $x$ must be a zero point of the cycle $p$ under our identification of the quadric with a projective line. That implies the reflection action on the quadric either has exactly two fixed points, or none at all.
  
  We will now show that each pair of conjugate points are the zero points of a cycle orthogonal to $p$ under the cycle pairing.  
  
  Let $u$ and $v$ be a pair of distinct conjugate points under the reflection by $p$.  Fist, we consider the case where both these points are finite.  
  
  The cycle $(X - u)(X - v)$ has $u$ and $v$ as zero points, and therefore it is orthogonal to the isotropic cycles $X^2 - 2uX + u^2$ and $X^2 - 2vX + v^2$.  Because reflection is an orthogonal transformation, it must map the cycle $(X - u)(X - v)$ to something that is orthogonal to both $X^2 - 2uX + u^2$ and $X^2 - 2vX + v^2$, i.e., to a cycle that has $u$ and $v$ as zero points.  But up to a scalar factor, $(X - u)(X - v)$ is the only cycle with that property.  Therefore,  $(X - u)(X - v)$ is an eigenvector of the reflection.  A reflection has eigenvectors with either eigenvalue 1 (for vectors in the orthogonal complement of $p$) or eigenvalue -1 (for vectors proportional to $p$).  That means $(X - u)(X - v)$ must be a vector in the orthogonal complement of $p$ because it clearly is not proportional to $p$ (not having the same zero points).
  
  Now consider the case where one of the conjugate points, say $v$, is the point at infinity.  In this case, $u$ and $\infty$ are the zero points of the $1$-cycle $X - u$.  Moreover, because $\infty$ is not a fixed point of the reflection defined by $p$, $p$ must be a $2$-cycle.  Let $p = aX^2 + bX + c$ with $a \neq 0$ and discriminant $<p, p> = b^2 - 4ac \neq 0$.  If we identify $\infty$ with a $0$-cycle $q = d$, then the reflection formula gives us
  \begin{equation*}
         q \mapsto q' = q - \frac{2\langle q, p \rangle}{\langle p, p \rangle}p = - saX^2 - sbX - sc + d, 
  \end{equation*}
where for abbreviation we have written  
$$ 
s = \frac{2\langle q, p \rangle}{\langle p, p \rangle} = -\frac{4ad}{b^2 - 4ac}. 
$$  
  
  Note that $q'$ is a 2-cycle, and of course isotropic because it is the transform of an isotropic cycle under an orthogonal mapping.  By looking at the coefficients of $X^2$ and $X$, we can readily see that this must be an isotropic 2-cycle with $-b/2a$ as zero point.  So we can take $u = -b/2a$.  The cycle pairing between the $1$-cycle $X + b/2a$ and $p = aX^2 + bX + c$ gives us $b - 2(b/2a)a = 0$.  
  
  Therefore the reflection defined by $p$ gives us an action on the quadric where the fixed points (if any) are the zero points of $p$ and the pairs of distinct conjugate points are zero points of cycles orthogonal to $p$.
  
  Proposition 1 of \cite{phat21} tells us that this action is exactly the same as the action defined by the involution or inversive transformation associated with $p$, under our identification of the quadric of $E$ with $\mathbf{P}(K^2)$.  
  \end{proof}
  
  Because involutions generate all the projective transformations of the projective line, the action of $SO(E)$ on the quadric is the same as the action of all projective transformations of the projective line.  That means the natural morphism of $SO(E)$ to $PGL(1, K)$ given by this geometric representation is surjective.  In addition, since the action of $SO(E)$ on the quadric of $E$ is faithful, as we showed earlier, that morphism of $SO(E)$ to $PGL(1, K)$ is also injective.  Therefore, the morphism is bijective and the above geometric representation gives us a natural isomorphism between $SO(E)$ and $PGL(1, K)$.

  The action of $SO(E)$ on the quadric is the same as the action of the projective orthogonal transformations, and each element of $SO(E)$ can be written as a product of two reflections according to the theorem of Cartan-Dieudonne (recall that dim$E$ = 3).  Therefore, our geometric representation provides a clear demonstration or explanation of the classical fact that each projective transformation in dimension 1 is a product of two involutions.  In fact, our geometric representation shows that each involution itself can also be written as a product of two other involutions, because the action of such an involution, once we identify it with a transformation in $SO(E)$, can be translated into a product of two reflections.

\section{A Natural Cross Ratio}
\label{sec:crossratio}

Following in the footsteps of Felix Klein, we are interested in finding numbers associated with configurations of points in the projective quadric of $E$ that are invariant under the action of the projective orthogonal group $PO(E)$.

If $u$ and $v$ are any representative vectors of two distinct points in the projective quadric, the pairing $\langle u, v \rangle$ is invariant under the action of $PO(E)$.  However, that pairing by itself cannot be an invariant, because different representative vectors will give us different values of the pairing due to different scalar factors.

Note that  $\langle u, v \rangle$ is always nonzero, because otherwise the two vectors $u$ and $v$ will generate a totally isotropic subspace of dimension 2, which by the Witt decomposition would mean $E$ contains the sum of two Artinian planes, which of course is impossible when dim$E$ = 3.  With that in mind, we can consider the following cross ratio associated to any four distinct points in the quadric, with representative vectors $x, y, z, t$:
$$
         \frac{\langle x, z \rangle \langle y, t \rangle}{\langle x, t \rangle \langle y, z \rangle}
$$

This cross ratio is well-defined because any pairing value of two distinct points is always nonzero as noted above, and moreover the cross ratio gives us the same value regardless of which representative vectors are chosen for the four distinct points due to cross cancellation of scalar factors.  Accordingly, this cross ratio is naturally invariant under the action of $PO(E)$.

Our geometric representation shows that projective transformations of the projective line are equivalent to orthogonal transformations of the quadric in a (regular) isotropic quadratic space of dimension 3.  For two finite points $u$ and $v$ on the projective line, if we take the $2$-cycles  $X^2 – 2uX + u^2$ and $X^2 – 2vX + v^2$  as isotropic cycles representing the corresponding points on the quadric of $E$, then their cycle pairing is $– 2(u – v)^2$.   That means the natural cross ratio for that quadric is equal to the square of the classical cross ratio of a projective line. 

Accordingly, our geometric representation gives us a new perspective on the classical cross ratio.  This new perspective also suggests the following proposition.

\begin{proposition}
  \label{prop:3}
  Any transformation of a projective line over a field K of characteristic $\neq 2$ that leaves invariant the square of the classical cross ratio must be a projective transformation.
\end{proposition}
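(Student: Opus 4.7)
The plan is to reduce the problem, by composing with a suitable projective transformation, to the case where the given map $f$ fixes three reference points on the line, and then to show by a direct cross-ratio computation that $f$ must be the identity. Concretely, I would choose the reference points $0, 1, \infty$ and pick the (unique) projective transformation $g$ sending $f(0), f(1), f(\infty)$ back to $0, 1, \infty$. Since $g$ preserves the classical cross ratio (and hence its square), the composite $g \circ f$ preserves the squared cross ratio and fixes $0, 1, \infty$. Replacing $f$ by $g \circ f$, I may assume that $f$ itself fixes $0$, $1$, and $\infty$; it then suffices to show $f$ is the identity, since projective transformations form a group.

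Next, using the identity $[\infty, 0; 1, x] = x$ (valid under the usual limiting conventions for every $x \in K \setminus \{0, 1\}$), invariance of the squared cross ratio yields $f(x)^2 = x^2$, so that $f(x) \in \{x, -x\}$ for each $x \in K \setminus \{0, 1\}$. In particular $f(-1) \in \{-1, 1\}$; since $f$ is a bijection and $f(1) = 1$, we must have $f(-1) = -1$.

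The decisive step is to rule out the possibility $f(a) = -a$ for any $a \in K \setminus \{0, 1, -1\}$. One computes
\[
   [0, 1; a, -1]^2 = \frac{4a^2}{(1-a)^2},
\]
while under the hypothesis $f(a) = -a$, using that $f$ fixes $0, 1, -1$,
\[
   [0, 1; -a, -1]^2 = \frac{4a^2}{(1+a)^2}.
\]
Invariance of the squared cross ratio would force $(1-a)^2 = (1+a)^2$, i.e. $4a = 0$, so $a = 0$ since the characteristic of $K$ is not $2$; this contradicts the choice of $a$. Hence $f(a) = a$ for every such $a$, and $f$ is the identity on $K \cup \{\infty\}$, making the original transformation projective. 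The only residual situation is $|K| \le 3$, in which case $K \cup \{\infty\}$ has at most four elements, all already fixed by $f$.

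I expect the main obstacle to be selecting a cross-ratio expression in the last step whose value genuinely changes when a single argument is negated. A symmetric choice such as $[0, \infty; a, -a]$ is tautologically unchanged under $a \mapsto -a$; the asymmetric choice $[0, 1; a, -1]$ is exactly what is needed.
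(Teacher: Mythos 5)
Your proof is correct and follows essentially the same route as the paper's: reduce to a map fixing $0,1,\infty$, use a cross ratio with these three points to get $f(\lambda)=\pm\lambda$ and hence $f(-1)=-1$, then use an asymmetric cross ratio involving $-1$ to exclude $f(\lambda)=-\lambda$ via $(1-\lambda)^2=(1+\lambda)^2 \Rightarrow 4\lambda=0$. The only differences are cosmetic (the paper uses $[\lambda,-1;1,\infty]=(1-\lambda)/2$ where you use $[0,1;a,-1]$), plus your explicit handling of the small-field case, which the paper leaves implicit.
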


\begin{proof}
Let $f$ be a transformation of the projective line $K\cup \{\infty\}$ that leaves invariant the square of the classical cross ratio.   We will show that if 0, 1 and $\infty$ are fixed by $f$, then $f$ must be the identity transformation.  This will prove the proposition, because any three distinct points can be mapped by a projective transformation to 0, 1 and $\infty$.
	
For any point $\lambda \neq 0, 1, 
\infty$, the classical cross ratio $[\lambda , 1; 0, \infty]$ = $\lambda$.  So $f(\lambda)$ = $\lambda$ or $f(\lambda)$ = $- \lambda$.  This implies that $f(–1) = –1$, as we already have $f(1) = 1$.

Let  $\lambda \neq 0, 1, -1, \infty$.  The cross ratio $[\lambda, – 1; 1, \infty] = (1 – \lambda)/2$.   If $f(\lambda) = – \lambda$, then the square of that cross ratio must be equal to the square of the cross ratio  $[-\lambda, -1; 1, \infty] = (1 + \lambda)/2$.  That means  $(1 – \lambda)^2 = ( 1 + \lambda)^2$  or  $–2\lambda = 2\lambda$, a contradiction as $\lambda \neq 0$.  Accordingly, $f(\lambda) = \lambda$ for all $\lambda$, i.e., $f$ is the identity transformation.
\end{proof}

We should mention that by the same reasoning, we have the same natural cross ratio for the quadric of any space $E$ of Witt index 1 (meaning the space is isomorphic to a sum of an Artinian plane and an anisotropic space, i.e., having no isotropic vectors).  The same invariant applies if we just work with a subset of that quadric stable under the action of a subgroup of $PO(E)$.  It happens that some well-known geometric spaces, such as the hyperbolic plane or hyperbolic spaces of higher dimensions, can be regarded as subset of such a quadric in a space of Witt index 1, and the relevant geometric transformation group can be identified with a subgroup of $PO(E)$.  In those situations, the cross ratio described here gives a natural invariant for the geometry.  In addition, if we restrict to certain subgroups of $PO(E)$, other invariants may also come up in a natural way.  For example, if we look at the geometry defined by the subgroup of orthogonal transformations leaving invariant a given nonisotropic vector $p$ of $E$, then the expression 
$$\frac{\langle x, p \rangle \langle y, p \rangle}{\langle x, y \rangle}$$
clearly gives us another natural invariant for that geometry.  I hope that the interested reader will have a happy time exploring these questions.

\end{document}